\documentclass[12pt]{article}
\title{Finite totally $k$-closed groups}
\usepackage{authblk}
\author[1,2]{Dmitry Churikov}
\author[3]{Cheryl E Praeger}
\affil[1]{Sobolev Institule of Mathematics, Novosibirsk, Russia}
\affil[2]{Novosibirsk State University, Novosibirsk, Russia}
\affil[3]{Centre for the Matheamtics of Symmetry and Computation\\
University of Western Australia, Perth, WA, Australia}

\usepackage{amsmath,amssymb,latexsym,amsthm,enumerate,color}

\newtheorem{theorem}{Theorem}[section]
\newtheorem{lemma}[theorem]{Lemma}

\newtheorem{remark}[theorem]{Remark}
\newtheorem{problem}{Problem}

\theoremstyle{definition}

\def\Om{\Omega}

\def\mZ{{\mathbb Z}}

\def\Sym{{\rm Sym}}
\def\Orb{{\rm Orb}}

\def\Syl{{\rm Syl}}

\begin{document}
\maketitle

\begin{abstract}
For a positive integer $k$, a group $G$ is said to be totally $k$-closed if in each of its faithful permutation representations, say on a set $\Om$, $G$ is the largest subgroup of $\Sym(\Om)$ which leaves invariant each of the $G$-orbits in the induced action on $\Om\times\dots\times \Om=\Om^k$. We prove that every abelian group $G$ is totally $(n(G)+1)$-closed, but is not totally $n(G)$-closed, where $n(G)$ is the number of invariant factors in the invariant factor decomposition of~$G$. In particular, we prove that for each $k\geq2$ and each prime $p$, there are infinitely many finite abelian $p$-groups which are totally $k$-closed but not totally $(k-1)$-closed.
This result in the special case $k=2$ is due to Abdollahi and Arezoomand. We pose several open questions about total $k$-closure.
\end{abstract}

\section{Introduction}\label{intro}

In 1969 Wielandt~\cite[Definition 5.3]{Wielandt1969} introduced, for each positive integer $k$, the concept of the $k$-closure of a permutation group $G$ on a set $\Om$. The \emph{$k$-closure} $G^{(k),\Omega}$ of $G$ is the set of all  $g\in\Sym(\Om)$ (permutations of $\Om$) such that $g$ leaves invariant each $G$-orbit in the induced $G$-action on ordered $k$-tuples from $\Om$. The $k$-closure $G^{(k),\Omega}$ is a subgroup of $\Sym(\Om)$ containing $G$ \cite[Theorem 5.4]{Wielandt1969}, and a permutation group $G$ is said to be \emph{$k$-closed} if $G^{(k),\Omega}=G$.
Different faithful permutation representations of the same group $G$ may have quite different $k$-closures. For example, the symmetric group $S_3$ acts faithfully and intransitively on
$\{1,2,3,4,5\}$ with orbits $\{1,2,3\}$ and $\{4,5\}$, and in this action its $2$-closure is $S_3\times C_2$; while $S_3$ is $2$-closed in its natural action on $\{1,2,3\}$.

In 2016,
D.~F.~Holt\footnote{\texttt{mathoverflow.net/questions/235114/2-closure-of-a-permutation-group}}
suggested a stronger concept independent of the permutation representation, and this was studied first by Abdollahi and Arezoomand  in \cite{AA} in the case $k=2$.  For a positive integer $k$, a group $G$ is said to be \emph{totally $k$-closed} if  $G^{(k),\Omega}=G$ whenever $G$ is faithfully represented as a permutation group on $\Om$. The only totally $1$-closed group is the trivial group consisting of a single element (see Remark~\ref{rem-k1}), while Abdollahi and Arezoomand \cite[Theorem 2]{AA} showed that a finite nilpotent group is totally $2$-closed if and only if it is cyclic, or it is a direct product of a generalised quaternion group and a cyclic group of odd order.  Here we consider larger values of $k$.

For a permutation group $G\leq\Sym(\Omega)$, and $k\geq 2$, Wielandt~\cite[Theorem 5.8]{Wielandt1969} proved that
\begin{equation}\label{eq1}
G\leq G^{(k),\Omega}\leq G^{(k-1),\Omega}.
\end{equation}
Thus if $G$ is totally $(k-1)$-closed, then it is automatically totally $k$-closed. Moreover $G=G^{(k),\Omega}$ for sufficiently large $k$, since by \cite[Theorem 5.12]{Wielandt1969}, this holds whenever there exist $k-1$ points $\alpha_1,\dots,\alpha_{k-1}\in\Omega$ such that the only element of $G$ fixing each $\alpha_i$ is the identity. The inclusion \eqref{eq1} does suggest that the family of totally $k$-closed groups might be larger than that of totally $(k-1)$-closed groups. We show that this is the case, even for abelian groups.

\begin{theorem}\label{thm1}
Let $k$ be an integer with $k\geq2$. Then, for each prime $p$, there are infinitely many finite abelian $p$-groups which are totally $k$-closed but not totally $(k-1)$-closed.
\end{theorem}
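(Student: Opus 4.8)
The strategy is to reduce Theorem~\ref{thm1} to the main abelian result announced in the abstract: every finite abelian group $G$ is totally $(n(G)+1)$-closed but not totally $n(G)$-closed, where $n(G)$ is the number of invariant factors in the invariant-factor decomposition of $G$. Granting that, the proof of Theorem~\ref{thm1} is essentially a bookkeeping exercise: for a fixed $k\geq 2$ and a fixed prime $p$, I would exhibit an explicit infinite family of abelian $p$-groups $G$ with $n(G)=k-1$. The cleanest choice is
\begin{equation}\label{eq-family}
G = G(a) = \mZ_{p^a}\times \mZ_{p^a}\times\dots\times\mZ_{p^a}\quad(\text{$k-1$ factors}),
\end{equation}
for $a=1,2,3,\dots$; each such $G$ is a $p$-group, the invariant factors are $p^a\mid p^a\mid\dots\mid p^a$ so $n(G)=k-1$, and distinct values of $a$ give groups of distinct orders $p^{a(k-1)}$, hence infinitely many pairwise non-isomorphic examples. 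Applying the abelian theorem with $n(G)=k-1$ gives that each $G(a)$ is totally $k$-closed but not totally $(k-1)$-closed, which is exactly the assertion of Theorem~\ref{thm1}.

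First I would record why $n(G)$ behaves as claimed on the family \eqref{eq-family}: the invariant-factor decomposition of a finite abelian $p$-group $\mZ_{p^{a_1}}\times\dots\times\mZ_{p^{a_m}}$ with $a_1\geq\dots\geq a_m\geq 1$ has invariant factors $p^{a_m}\mid\dots\mid p^{a_1}$, so $n(G)=m$ is just the number of cyclic direct factors in the primary decomposition; for \eqref{eq-family} this is $k-1$. Second, I would invoke the abelian theorem in the direction ``$G$ is totally $(n(G)+1)$-closed'' to conclude each $G(a)$ is totally $k$-closed. Third, I would invoke it in the direction ``$G$ is not totally $n(G)$-closed'' to conclude each $G(a)$ is not totally $(k-1)$-closed. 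Finally I would note the orders $|G(a)|=p^{a(k-1)}$ are unbounded, giving infinitely many examples, and remark that the case $k=2$ recovers the Abdollahi--Arezoomand examples (cyclic $p$-groups $\mZ_{p^a}$ are totally $2$-closed but not totally $1$-closed).

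The genuine mathematical content, and hence the main obstacle, is not in this reduction at all but in establishing the two halves of the abelian theorem that the reduction uses as a black box. The upper bound (total $(n(G)+1)$-closure) is the harder of the two: given an arbitrary faithful permutation representation of abelian $G$ on a set $\Om$, one must show that any permutation preserving all $G$-orbits on $\Om^{n(G)+1}$ already lies in $G$. The orbit structure of an abelian group on $\Om$ decomposes $\Om$ into transitive constituents $G/H_i$ for various subgroups $H_i$ with $\bigcap_i \Core_G(H_i)=\bigcap_i H_i=1$ (cores are trivial since $G$ is abelian), and the key point will be that faithfulness forces the intersection of point stabilisers over any $n(G)$-subset of well-chosen points to still be nontrivial in the worst case, whereas over a suitable $(n(G)+1)$-tuple it becomes trivial — this is where the number of invariant factors enters, via the fact that $G$ cannot be generated by fewer than $n(G)$ elements, equivalently cannot be written as an intersection-trivial family of proper subgroups indexed by fewer than $n(G)$ members in a way that also bounds the closure. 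For the lower bound (not totally $n(G)$-closed) I would construct one explicit bad representation: take $\Om$ to be the disjoint union of the $n(G)$ coset spaces $G/K_i$ where $K_1,\dots,K_{n(G)}$ are the kernels of the projections onto the invariant cyclic factors, so $\bigcap K_i=1$ and the representation is faithful, and then produce a permutation in $G^{(n(G)),\Om}\setminus G$ by exploiting that $n(G)$-tuples cannot ``see'' the diagonal relations among the $n(G)$ constituents. Both constructions are standard in flavour but the upper bound requires the careful counting argument tying the closure degree to the invariant-factor count, and that is where I would expect to spend essentially all of the effort.
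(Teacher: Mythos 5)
Your reduction is exactly the paper's own proof of Theorem~\ref{thm1}: the paper deduces it in a one-line remark from the $p$-group case of Theorem~\ref{thm2} (Lemma~\ref{PGroupsCase}) by noting there are infinitely many abelian $p$-groups with $k-1$ invariant factors, and your explicit family $(\mZ_{p^a})^{k-1}$ is a correct instance of this. Your closing sketch of how the black-box abelian theorem itself would be proved diverges somewhat from the paper (which gets the upper bound from a base-size bound $b(G,\Omega)\leq n(G)$ plus Wielandt's Theorem~\ref{thm:W2}, and the lower bound from an explicit construction with $n+1$ independent cycles), but that is outside the statement under review and does not affect the correctness of this proof.
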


The result of Abdollahi and Arezoomand shows that the finite totally $2$-closed abelian groups are precisely the cyclic groups. It turns out, also for larger values of $k$, that the total $k$-closure property for abelian groups is linked with the numbers of  cyclic direct factors in their direct decompositions.  A study of these decompositions leads to useful bounds, from which we deduce Theorem~\ref{thm1}.

According to the fundamental theorem for finite abelian groups, each nontrivial finite abelian group $G$ can be written as a  direct product $G=H_1\times\dots\times H_n$, for some $n\geq1$, such that each  $H_i\cong \mZ_{d_i}$, $d_1>1$, and $d_i|d_{i+1}$ for $1\leq i<n$. The integer $n$ and the $d_i$ are uniquely determined by $G$, up to the order of the factors. The $H_i$ are called the \emph{invariant factors} of $G$, and we write $n(G):=n$ for the number of invariant factors. We also have the primary decomposition of $G$ as
$G=\prod_{p\in\pi(G)} G_p$, where $\pi(G)$ is the set of primes dividing $|G|$ and $G_p$ is the (unique) Sylow $p$-subgroup of $G$. It is straightforward to see that $n(G)=\max_{p\in\pi(G)} n(G_p)$.   Our main result is the following theorem, from which we deduce Theorem~\ref{thm1}.

\begin{theorem}\label{thm2}
Let $G$ be a finite abelian group with $|G|>1$. Then $G$ is totally $(n(G)+1)$-closed, but is not totally $n(G)$-closed.
\end{theorem}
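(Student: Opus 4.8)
\textbf{Proof proposal.} The plan is to control the $k$-closure of an intransitive abelian group through its orbit components and thereby reduce Theorem~\ref{thm2} to a single Helly-type lemma. Three facts are the workhorses. (i) A regular abelian permutation group $A\le\Sym(A)$ is $2$-closed: its orbitals are the sets $\{(u,u+d):u\in A\}$, $d\in A$, and a permutation preserving all of them is a translation; hence, by \eqref{eq1}, $A$ is $j$-closed for every $j\ge2$. (ii) Localization: for $G\le\Sym(\Om)$ with orbits $\Om_1,\dots,\Om_m$, a permutation $h$ of $\Om$ lies in $G^{(k),\Om}$ if and only if, for every index set $S$ with $|S|\le k$, its restriction to $\Om_S:=\bigcup_{i\in S}\Om_i$ lies in $(G^{\Om_S})^{(k),\Om_S}$, where $G^{\Om_S}\le\Sym(\Om_S)$ is the group induced by $G$; this holds because an ordered $k$-tuple meets at most $k$ orbits. (iii) Combining (i) and (ii) with the observation that one may test on $k$-tuples meeting every orbit, an abelian group acting faithfully on a union of at most $k$ orbits, with $k\ge2$, is $k$-closed; and then $(G^{\Om_S})^{(k),\Om_S}$ is simply the image of $G$ in $\prod_{i\in S}\Sym(\Om_i)$.

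For the upper bound, fix a faithful action of $G$ on $\Om$; let $L_i\le G$ be the point stabilizers (normal, since $G$ is abelian, with $\bigcap_iL_i=1$), so $\Om_i\cong G/L_i$ is a regular $G/L_i$-set, and put $k=n(G)+1$. By (i)--(iii), a permutation $h$ lies in $G^{(k),\Om}$ exactly when $h$ acts on each $\Om_i$ by a translation $c_i+L_i$ and, for every $S$ with $|S|\le n(G)+1$, the cosets $\{c_i+L_i:i\in S\}$ have a common point; and such an $h$ is then an element of $G$ as soon as the \emph{entire} family $\{c_i+L_i:i\in I\}$ has a common point. Hence the first assertion of Theorem~\ref{thm2} is equivalent to: the family of all cosets of all subgroups of a finite abelian group $G$ has Helly number at most $n(G)+1$. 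I would prove this by a Radon-type argument: among all subfamilies with empty intersection choose a minimal one $c_{i_1}+L_{i_1},\dots,c_{i_s}+L_{i_s}$, pick $x_j$ in the intersection of all but the $j$-th member, and work inside the subgroup $H=\langle x_2-x_1,\dots,x_s-x_1\rangle$. Since $x_j,x_1\in c_{i_l}+L_{i_l}$ whenever $l\notin\{1,j\}$ while $x_l\notin c_{i_l}+L_{i_l}$, after passing to a primary component the images of $x_2-x_1,\dots,x_s-x_1$ in the Frattini quotient $H/pH$ are linearly independent (the element $x_l-x_1$ has nonzero image of $p$-power order in $G/L_{i_l}$, and all the others die there). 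So, writing $\mu$ for the minimal number of generators, $s-1\le\mu(H)\le\mu(G)=n(G)$; that is, $s\le n(G)+1$, as required.

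For the second assertion, when $n:=n(G)\ge2$ take the invariant factor decomposition $G=H_1\times\cdots\times H_n$ with $H_i\cong\mZ_{d_i}$, $d_1\mid\cdots\mid d_n$, let $q$ be the least prime dividing $d_1$, and let $G$ act on the disjoint union of the regular $G$-sets $G/K_1,\dots,G/K_{n+1}$, where $K_i=\prod_{j\ne i}H_j$ for $i\le n$ and $K_{n+1}=\ker\phi$ with $\phi\colon G\to\mZ_q$, $\phi(x_1,\dots,x_n)=x_1+\cdots+x_n\bmod q$. This action is faithful ($\bigcap_{i\le n}K_i=1$); and since $\phi$ restricts to a surjection on each $H_j$ we have $K_{n+1}+H_j=G$ for all $j$, from which $G\to\prod_{i\in S}G/K_i$ is onto for every $S$ with $|S|\le n$. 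By (ii) and (iii) it follows that $G^{(n),\Om}$ is the whole product $\prod_{i=1}^{n+1}G/K_i$, which contains the image of $G$ with index $q>1$; so $G$ is not totally $n(G)$-closed. The case $n(G)=1$ is covered by Remark~\ref{rem-k1}. (Theorem~\ref{thm1} is then the special case of Theorem~\ref{thm2} applied to the groups $(\mZ_{p^a})^{k-1}$, $a\ge1$, each of which has $n(G)=k-1$.)

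The step I expect to be the main obstacle is the Helly-number lemma. Making the bound exactly $n(G)+1$, rather than a crude function of $|G|$, is precisely what makes Theorem~\ref{thm2} sharp, and the Radon argument must be run carefully enough to see that the relevant difference vectors really are independent modulo $pH$; the ``pairwise comaximal implies globally comaximal'' intuition from the Chinese Remainder Theorem is false for modules, and that failure is exactly why $2$ gets replaced by $n(G)+1$. A subsidiary delicate point is checking, in the lower-bound representation, that the $n(G)$-closure is genuinely the full product $\prod_{i=1}^{n+1}G/K_i$ --- i.e.\ that the subgroups $K_i$ carry no incompatibility involving fewer than $n(G)+1$ of them at once.
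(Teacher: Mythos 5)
Your route is genuinely different from the paper's, and in outline it is sound. The paper proves the upper bound by bounding the base size of an arbitrary faithful representation by $N(G)=\sum_p n(G_p)$ (Lemma~\ref{BaseNumber}) and invoking Theorem~\ref{thm:W2}, then closes the gap between $N(G)$ and $n(G)$ with Theorem~\ref{ABGrpClosure}, which splits the $k$-closure into Sylow components; the lower bound is a cycle construction for $p$-groups, again glued together by Theorem~\ref{ABGrpClosure}. Your facts (i)--(iii) are all correct, and (iii) is genuinely sharper than ``base size $m$ implies $(m+1)$-closed'': because a regular abelian group is $2$-closed, agreement with some $g\in G$ at one point of each orbit propagates to the whole orbit, so a faithful abelian action with at most $k$ orbits is $k$-closed (for $k\ge 2$), not merely $(k+1)$-closed. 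The resulting translation of total $(n(G)+1)$-closure into a Helly statement for cosets is valid, and your lower-bound representation on $\bigsqcup_{i=1}^{n+1}G/K_i$ is correct and handles general abelian $G$ in one step, where the paper works prime by prime; the only detail to write out is the surjectivity of $G\to\prod_{i\in S}G/K_i$ for $|S|\le n$, which for $i\le n$ with $n+1\in S$ needs the standard criterion $K_i+\bigcap_{j\in S\setminus\{i\}}K_j=G$ and one further line beyond ``$K_{n+1}+H_j=G$''.

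The one genuine gap is exactly where you predicted: the independence step in the Helly lemma. As written, you fix a prime $p$ and assert that each $y_l=x_l-x_1$ has nonzero image of $p$-power order in $G/L_{i_l}$. But all you know is that this image is nonzero; its order may be divisible by different primes for different $l$, and if $p$ does not divide the order of $y_l$ modulo $L_{i_l}$ then the projection argument collapses there (since then $p\langle \bar y_l\rangle=\langle \bar y_l\rangle$), so no single Frattini quotient $H/pH$ need separate all the $y_l$. The repair is to pass to a primary component of the whole coset configuration, not of $H$: since $\bigcap_i(c_i+L_i)=\prod_q\bigcap_i(c_{i,q}+L_{i,q})$ over the primes $q\in\pi(G)$, a family of size $s$ in $G$ with empty intersection but all proper subfamilies intersecting yields, for at least one prime $q$, a family of size $s$ in the Sylow subgroup $G_q$ with the same two properties. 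There every nonzero element has order divisible by $q$, your independence argument in $H/qH$ goes through, and you get $s-1\le\mu(H)\le n(G_q)\le n(G)$. With that repair the proof is complete --- though note that this is, in disguise, the same reduction to $p$-groups that the paper carries out via Theorem~\ref{ABGrpClosure}, so neither approach avoids the primary decomposition entirely.
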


The following auxiliary assertion on the $k$-closure of the direct product abelian permutation $p$-groups may be of independent interest. It is proved in Section~\ref{prelim}, and is used in Section~\ref{proofs} to reduce the proof of Theorem~\ref{thm2} to the case of $p$-groups. For its statement it is convenient to use $\Syl(G)$ to denote the set of all Sylow subgroups of a group $G$; if $G$ is abelian, $\Syl(G)$ will consist of one Sylow $p$-subgroup for each prime $p\in\pi(G)$. 

\begin{theorem}\label{ABGrpClosure}
Let $G$ be a finite abelian permutation group on a set $\Omega$, and $k$ an integer, $k\geq 2$. Then
$G^{(k),\Omega}=\prod_{P\in\Syl(G)} P^{(k),\Omega}$.
\end{theorem}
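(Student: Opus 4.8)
The plan is to reduce Theorem~\ref{ABGrpClosure} to the case of two coprime factors and then induct on the number of prime divisors of $|G|$. The key step I would establish first is: if $G=A\times B$ is a finite abelian permutation group on $\Omega$ with $\gcd(|A|,|B|)=1$, then $G^{(k),\Omega}=A^{(k),\Omega}\,B^{(k),\Omega}$, with the two factors centralising each other. Granting this, write $G=P\times O$ with $P\in\Syl(G)$ and $O$ the Hall $p'$-subgroup $\prod_{Q\in\Syl(G),\,Q\neq P}Q$; every subgroup of $G$ acts faithfully on $\Omega$ since $G$ does, so $O$ is again a finite abelian permutation group on $\Omega$ with one fewer prime divisor, and the key step together with monotonicity of $k$-closure (for $Q\leq O$ we have $Q^{(k),\Omega}\leq O^{(k),\Omega}$, and the key step gives that $O^{(k),\Omega}$ centralises $P^{(k),\Omega}$, hence so does $Q^{(k),\Omega}$) lets the induction go through and yields $G^{(k),\Omega}=P^{(k),\Omega}\prod_{Q\neq P}Q^{(k),\Omega}$ with all factors pairwise commuting. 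Throughout I would freely use the following standard facts about $k$-closure: $H\leq G$ implies $H^{(k),\Omega}\leq G^{(k),\Omega}$ (since $G$-orbits on $\Omega^k$ are unions of $H$-orbits); $G^{(k),\Omega}\leq G^{(2),\Omega}$ for $k\geq2$; and $G^{(2),\Omega}$ permutes the blocks of every $G$-invariant partition of $\Omega$ (the ``lie in a common block'' relation on $\Omega^2$ being a union of $G$-orbits).

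For the key step I would first record the structure of a faithful abelian action. Decomposing $\Omega=\bigsqcup_j\Omega_j$ into $G$-orbits, abelianness forces each point stabiliser in $\Omega_j$ to equal the kernel $K_j$ of the action on $\Omega_j$, so $G$ acts on $\Omega_j$ as the regular representation of $\overline G_j:=G/K_j$, and $\bigcap_jK_j=1$. Write $\overline G_j=\overline A_j\times\overline B_j$ with $\overline A_j=AK_j/K_j$ and $\overline B_j=BK_j/K_j$; coprimality gives $\overline A_j\cap\overline B_j=1$. Let $\mathcal B_A$ be the $G$-invariant partition of $\Omega$ whose parts are, inside each $\Omega_j$, the cosets of $\overline B_j$ (equivalently, the $\overline B_j$-orbits), and let $\mathcal B_B$ be defined symmetrically; because $\overline A_j\cap\overline B_j=1$, the common refinement $\mathcal B_A\wedge\mathcal B_B$ is the partition of $\Omega$ into singletons. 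Put $\Omega_A:=\Omega/\mathcal B_A$ with quotient map $\phi_A\colon\Omega\to\Omega_A$, and similarly $\Omega_B,\phi_B$. Then $\Omega_A$ is a faithful $A$-set whose $A$-orbits are exactly the images $\phi_A(\Omega_j)\cong\overline A_j$, and $G$ acts on $\Omega_A$ through its projection onto $A$, with kernel $B$; symmetrically for $\Omega_B$. The combinatorial core is the observation that, applying $\phi_A$ and $\phi_B$ coordinate-wise, $\overline\omega\sim_G\overline\omega'$ in $\Omega^k$ if and only if $\phi_A(\overline\omega)\sim_A\phi_A(\overline\omega')$ and $\phi_B(\overline\omega)\sim_B\phi_B(\overline\omega')$; this holds because on $\Omega_j\cong\overline A_j\times\overline B_j$ an element $g=xy$ $(x\in A,\ y\in B)$ moves the $\overline A_j$-coordinate of a point only through $x$ and the $\overline B_j$-coordinate only through $y$, so each $G$-orbit on $\Omega^k$ is a product of an $A$-orbit on the $\overline A$-coordinates with a $B$-orbit on the $\overline B$-coordinates.

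Now take $g\in G^{(k),\Omega}$. As $k\geq2$, $g$ permutes the blocks of $\mathcal B_A$ and of $\mathcal B_B$, inducing permutations $\overline g_A\in\Sym(\Omega_A)$, $\overline g_B\in\Sym(\Omega_B)$ with $\phi_A\circ g=\overline g_A\circ\phi_A$ and $\phi_B\circ g=\overline g_B\circ\phi_B$. For $\overline\eta\in(\Omega_A)^k$, lifting to $\overline\omega$ with $\phi_A(\overline\omega)=\overline\eta$ and using $g\overline\omega\sim_G\overline\omega$ together with the ``only if'' direction above gives $\overline g_A\overline\eta=\phi_A(g\overline\omega)\sim_A\overline\eta$; hence $\overline g_A\in A^{(k),\Omega_A}$, and symmetrically $\overline g_B\in B^{(k),\Omega_B}$. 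I would then lift $\overline g_A$ to $\widetilde g_A\in\Sym(\Omega)$ acting on each $\Omega_j\cong\overline A_j\times\overline B_j$ by $(\overline a,\overline b)\mapsto(\overline g_A\overline a,\overline b)$ (legitimate since $\overline g_A$ preserves each $A$-orbit $\phi_A(\Omega_j)$), and symmetrically $\widetilde g_B$. A short check using $\overline g_A\in A^{(k),\Omega_A}$ shows $\widetilde g_A\in A^{(k),\Omega}$ (the $A$-orbit of $\overline\omega\in\Omega^k$ consists of the tuples $(x\cdot\overline a_i,\overline b_i)_i$, $x\in A$, and $\overline g_A$ provides a suitable $x$), symmetrically $\widetilde g_B\in B^{(k),\Omega}$, and $\widetilde g_A,\widetilde g_B$ commute since on each $\Omega_j$ they act on complementary coordinates. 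Finally $h:=\widetilde g_B^{-1}\widetilde g_A^{-1}g$ lies in $G^{(k),\Omega}$ and satisfies $\phi_A\circ h=\phi_A$, $\phi_B\circ h=\phi_B$; thus $h$ fixes every block of $\mathcal B_A$ and every block of $\mathcal B_B$ setwise, hence fixes every part of $\mathcal B_A\wedge\mathcal B_B$, i.e.\ every point, so $g=\widetilde g_A\widetilde g_B\in A^{(k),\Omega}B^{(k),\Omega}$. The reverse inclusion $A^{(k),\Omega}B^{(k),\Omega}\leq G^{(k),\Omega}$ is immediate from monotonicity, which finishes the key step and hence the theorem.

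I expect the main obstacle to be the bookkeeping of the last paragraph: checking that the quotient actions and their lifts are well defined, that $\widetilde g_A$ genuinely lies in $A^{(k),\Omega}$ and not merely in $A^{(2),\Omega}$, and above all that coprimality is invoked at precisely the right place --- to force $\overline A_j\cap\overline B_j=1$, which is what makes $\mathcal B_A\wedge\mathcal B_B$ the point partition and thereby forces $h=1$. A secondary subtlety is the orbit-decomposition observation, since $G$-orbits on $\Omega^k$ genuinely mix coordinates lying in different orbits $\Omega_j$; phrasing it through the two quotient maps $\phi_A,\phi_B$ is what makes it transparent, and it is worth isolating as a separate lemma.
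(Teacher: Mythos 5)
Your proposal is correct, but it reaches the conclusion by a genuinely different route from the paper. The paper works one prime at a time: it quotes Wielandt's exercises to see that $G^{(k),\Omega}$ is abelian with the same prime divisors as $G$, shows via a Hall-subgroup orbit lemma that the Sylow $p$-subgroups $P\leq G$ and $Q\leq G^{(k),\Omega}$ have the same orbits, and then, given $g\in Q$ and a $k$-tuple, takes the element $h\in G$ supplied by Wielandt's criterion and replaces it on the relevant $P$-orbits by an element of $P$, using a lemma that the setwise stabiliser of a union of $P$-orbits induces exactly $P^\Delta$ there (which rests on semiregularity of the centraliser of a transitive abelian group). This identifies $P^{(k),\Omega}$ with $Q$ and the theorem follows since an abelian group is the direct product of its Sylow subgroups. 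You instead split $G=A\times B$ into coprime factors, use regularity of each transitive abelian constituent to build the two quotient actions $\Omega_A,\Omega_B$, prove that $G$-orbits on $\Omega^k$ factor as products of $A$- and $B$-orbits on the quotients, and decompose each element of $G^{(k),\Omega}$ as a product of commuting lifts; coprimality enters exactly where you say, to make $\mathcal B_A\wedge\mathcal B_B$ the point partition. The two arguments hinge on the same underlying fact (a transitive abelian group is regular), but yours is more self-contained --- it avoids Wielandt's Exercises 5.26/5.28 and the external lemma on Hall-subgroup orbits --- and it yields slightly more structural information, namely that the closure acts coordinate-wise on the two quotients; the paper's version is shorter given the cited machinery and directly exhibits $P^{(k),\Omega}$ as the Sylow $p$-subgroup of $G^{(k),\Omega}$.

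One small loose end: you assert that $A^{(k),\Omega}$ and $B^{(k),\Omega}$ centralise each other, but you only verify that the particular lifts $\widetilde g_A,\widetilde g_B$ commute. To get it in general, apply your decomposition to $a\in A^{(k),\Omega}\leq G^{(k),\Omega}$ and note that $\overline a_B=1$ (since $a$ fixes every $A$-orbit, i.e.\ every block of $\mathcal B_B$, setwise), so $a=\widetilde a_A$ is itself a pure first-coordinate permutation, and symmetrically for $B^{(k),\Omega}$. For the set equality $G^{(k),\Omega}=\prod_{P\in\Syl(G)}P^{(k),\Omega}$ claimed in the theorem this is not strictly needed, since the product is a subset of the group $G^{(k),\Omega}$ and your argument gives the reverse containment, but it is worth recording if you want the product to be visibly a direct product.
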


The results in our short paper serve to raise a number of open questions, and we record a few here. The first relates to Theorem~\ref{thm2}. It would be interesting to have a generalisation of the classification by Abdollahi and Arezoomand \cite[Theorem 2]{AA} of nilpotent totally $2$-closed nilpotent groups for larger values of $k$.

\begin{problem}\label{p1}
For $k>2$ determine all finite nilpotent groups $G$ that are totally $k$-closed.
\end{problem}

As we noted above, the symmetric group $S_3$ is not totally $2$-closed.
Indeed, it was shown by Abdollahi, Arezoomand and Tracey~\cite[Theorem B]{AAT} a finite soluble group is
 totally $2$-closed if and only if it is nilpotent, hence known by \cite[Theorem 2]{AA}.
However it is not difficult to see that it is totally $3$-closed, since in every faithful permutation representation of $G=S_3$ on a set $\Omega$ there must be a $G$-orbit of length $3$ or $6$, and the stabiliser in $G$ of two points $\alpha, \beta$ from such an orbit is trivial. Hence by \cite[Theorem 5.12]{Wielandt1969}, $G=G^{(3),\Omega}$. As a first step it would be interesting to know which other non-nilpotent soluble groups are  totally $3$-closed.

\begin{problem}\label{p2}
Determine the finite soluble groups that are totally $3$-closed.
\end{problem}

For some time it was believed that all finite totally $2$-closed groups would be soluble, and it was somewhat surprising to the authors of \cite{AMPT} to discover that exactly six or the sporadic simple groups are totally $2$-closed, namely $J_1, J_3, J_4, Ly, Th, M$.

\begin{problem}\label{p3}
Find all the the  totally $3$-closed sporadic simple groups. More generally, for each sporadic simple group $G$ determine the least value of $k$ such that $G$ is totally $k$-closed.
\end{problem}

The classification of the finite nonabelian simple totally $2$-closed groups is still not complete, and we refer the reader to \cite{AMPT} for details of the status of this problem and other open questions about total $2$-closure.

\subsection*{Acknowledgements}
The authors would like to thank Natalia Maslova for her role in organising the 2020 Ural Workshop on Group Theory and Combinatorics. They are also grateful to an anonymous referee for some hepful comments on the exposition.
The first author is supported by Mathematical Center in Akademgorodok under agreement No. 075-15-2019-1613 with the Ministry of Science and Higher Education of the Russian Federation. The second author is supported by the Australian Research Council Discovery Project DP190100450.

\section{Preliminaries}\label{prelim}

In this section we give some background theory, and in particular we prove Theorem~\ref{ABGrpClosure}.
First we state two results of Wielandt for convenience.

\begin{theorem}\label{thm:W}{\rm [Wielandt, \cite[Theorem 5.6]{Wielandt1969}]}\quad Let $G\leq \Sym(\Omega)$, let $k\geq1$, and let $x\in\Sym(\Omega)$. Then $x\in G^{(k),\Omega}$ if and only if, for all $(\alpha_1, \dots, \alpha_k)\in\Omega^k$, there exists $g\in G$ such that $\alpha_i^x = \alpha_i^g$ for $i=1,\dots,k$.
\end{theorem}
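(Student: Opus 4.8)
The plan is to prove the equivalence directly from the definition of the $k$-closure, by unwinding the componentwise action of $\Sym(\Om)$ on ordered $k$-tuples. Recall that $G^{(k),\Om}$ consists of those $x\in\Sym(\Om)$ that leave each $G$-orbit on $\Om^k$ invariant, where the action on tuples is $(\alpha_1,\dots,\alpha_k)^x=(\alpha_1^x,\dots,\alpha_k^x)$. The first observation I would record is that the $G$-orbit of a tuple $\ol\alpha=(\alpha_1,\dots,\alpha_k)$ is exactly $\{(\alpha_1^g,\dots,\alpha_k^g):g\in G\}$, so that two tuples $\ol\alpha,\ol\beta$ lie in a common $G$-orbit precisely when there is some $g\in G$ with $\beta_i=\alpha_i^g$ for all $i$. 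With this reformulation the condition in the statement says exactly that, for every tuple $\ol\alpha$, the image $\ol\alpha^x$ lies in the $G$-orbit of $\ol\alpha$.

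For the forward implication, suppose $x\in G^{(k),\Om}$, and fix $\ol\alpha\in\Om^k$ with $G$-orbit $\De$. Since $x$ leaves $\De$ invariant we have $\ol\alpha^x\in\De^x=\De$, so $\ol\alpha^x$ lies in the $G$-orbit of $\ol\alpha$; reading off the coordinates yields $g\in G$ with $\alpha_i^x=\alpha_i^g$ for $i=1,\dots,k$, as required.

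For the converse, suppose the coordinatewise condition holds for every tuple. Then for each $G$-orbit $\De$ and each $\ol\alpha\in\De$ the image $\ol\alpha^x$ lies in the $G$-orbit of $\ol\alpha$, namely $\De$; hence $\De^x\subseteq\De$. The remaining point, which is the one genuinely requiring care, is the reverse inclusion $\De\subseteq\De^x$: here I would use that $x$ is a bijection of $\Om$ and hence of $\Om^k$. Given $\ol\beta\in\De$, put $\ol\alpha:=\ol\beta^{x^{-1}}$, so that $\ol\alpha^x=\ol\beta$. Applying the hypothesis to $\ol\alpha$ shows that $\ol\beta=\ol\alpha^x$ lies in the $G$-orbit of $\ol\alpha$; since also $\ol\beta\in\De$, that orbit must equal $\De$, so $\ol\alpha\in\De$ and therefore $\ol\beta=\ol\alpha^x\in\De^x$. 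Thus $\De^x=\De$ for every $G$-orbit $\De$, i.e. $x\in G^{(k),\Om}$.

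The argument is essentially a careful bookkeeping of the definitions, and I do not expect any serious obstruction; the only step where something beyond rewriting is needed is establishing surjectivity of $x$ onto each orbit in the converse direction. I would stress that invoking $x^{-1}$ there makes the proof work uniformly, with no appeal to finiteness of $\Om$ — which matters since Wielandt's framework permits infinite $\Om$, so that the naive shortcut ``an injective self-map of a finite set is surjective'' would not suffice in general.
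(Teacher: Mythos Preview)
Your argument is correct and is essentially the standard unwinding of Wielandt's definition; there is nothing to fault in either direction, and your care with the surjectivity step via $x^{-1}$ is appropriate. Note, however, that the paper does not actually prove this theorem: it is merely quoted from Wielandt's lecture notes \cite[Theorem~5.6]{Wielandt1969} for later use, so there is no ``paper's own proof'' to compare against. Your write-up would serve perfectly well as a self-contained justification should one be wanted.
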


\begin{theorem}\label{thm:W2}{\rm [Wielandt, \cite[Theorem 5.12]{Wielandt1969}]}\quad  Let $G\leq \Sym(\Omega)$ and $k\geq1$, and suppose that $\alpha_1,\ldots,\alpha_{k}\in\Omega$ such that $G_{\alpha_1\ldots \alpha_{k}}=1$. Then $G^{(k+1),\Omega}=G$.
\end{theorem}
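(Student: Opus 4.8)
The plan is to prove the two inclusions of the equality $G^{(k+1),\Om}=G$ separately, the inclusion $G\leq G^{(k+1),\Om}$ being free from \eqref{eq1} (equivalently Wielandt's Theorem 5.4), so that the real content is the reverse inclusion $G^{(k+1),\Om}\leq G$. The whole argument will rest on the characterisation of closure elements in Theorem~\ref{thm:W}, combined with the hypothesis $G_{\alpha_1\ldots\alpha_k}=1$, which is exactly the device that upgrades a ``local'' agreement with $G$ into a ``global'' one.

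First I would fix an arbitrary $x\in G^{(k+1),\Om}$ and aim to exhibit a single $g_0\in G$ with $x=g_0$ as permutations of $\Om$. Applying Theorem~\ref{thm:W} to the $k$-tuple $(\alpha_1,\ldots,\alpha_k)$ (legitimate since $x\in G^{(k+1),\Om}\leq G^{(k),\Om}$ by \eqref{eq1}, or by padding the tuple to length $k+1$) produces $g_0\in G$ with $\alpha_i^{\,x}=\alpha_i^{\,g_0}$ for $i=1,\ldots,k$. The claim is that this particular $g_0$ already agrees with $x$ everywhere on $\Om$.

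To verify the claim I would take an arbitrary point $\beta\in\Om$ and apply Theorem~\ref{thm:W} to the $(k+1)$-tuple $(\alpha_1,\ldots,\alpha_k,\beta)$, obtaining some $g\in G$ (a priori depending on $\beta$) with $\alpha_i^{\,x}=\alpha_i^{\,g}$ for each $i$ and $\beta^{\,x}=\beta^{\,g}$. Comparing with $g_0$ gives $\alpha_i^{\,g}=\alpha_i^{\,g_0}$ for all $i$, hence $\alpha_i^{\,gg_0^{-1}}=\alpha_i$ for all $i$, so that $gg_0^{-1}\in G_{\alpha_1\ldots\alpha_k}=1$ and therefore $g=g_0$. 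Consequently $\beta^{\,x}=\beta^{\,g}=\beta^{\,g_0}$, and since $\beta$ was arbitrary we conclude $x=g_0\in G$. This yields $G^{(k+1),\Om}\leq G$ and completes the proof.

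The step I expect to carry the weight of the argument is the use of the trivial stabiliser to force $g=g_0$: this is precisely what removes the dependence of the witnessing group element on the chosen point $\beta$. Without the hypothesis $G_{\alpha_1\ldots\alpha_k}=1$, Theorem~\ref{thm:W} supplies only a pointwise-varying family of elements of $G$ matching $x$, which need not assemble into a single permutation lying in $G$; the triviality of the stabiliser is exactly the rigidity needed to pin down one $g_0$ valid on all of $\Om$. Everything else is a routine unwinding of Theorem~\ref{thm:W} and \eqref{eq1}.
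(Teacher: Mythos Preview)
Your proof is correct and is essentially the standard argument for this classical result. Note, however, that the paper does not supply its own proof of Theorem~\ref{thm:W2}: it merely states the result with attribution to Wielandt~\cite[Theorem~5.12]{Wielandt1969}, so there is no in-paper proof to compare against. Your argument---fixing a candidate $g_0\in G$ via the base points and then using the trivial stabiliser $G_{\alpha_1\ldots\alpha_k}=1$ to force any other witness $g$ (obtained from Theorem~\ref{thm:W} applied to $(\alpha_1,\ldots,\alpha_k,\beta)$) to coincide with $g_0$---is exactly the proof Wielandt gives, and it goes through cleanly as written.
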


Next we discuss total $1$-closure.

\begin{remark}\label{rem-k1}
{\rm
Suppose that $G$ is a finite totally $1$-closed group. Consider the regular representation of $G$ on $\Omega=G$. Since $G$ is transitive on $\Omega$ it follows from Theorem~\ref{thm:W} that $G^{(1),\Omega}=\Sym(\Omega)$. Thus, since $G$ is totally $1$-closed, it follows that $\Sym(\Omega)=G$ is regular, and hence $|G|\leq 2$. However, if $G=C_2$, then in the representation $G=\langle (12)(34)\rangle\leq\Sym(\Omega)$ on $\Omega=\{1,2,3,4\}$ we have  $G^{(1),\Omega}= \langle (12), (34)\rangle \ne G$. Hence $G=1$ is the only possibility.
}
\end{remark}

For a prime~$p\,|\, n$, the largest $p$-power divisor  of~$n$ is denoted by~$n_p$; if $\pi$ is a set prime divisors of $n$, then $n_\pi:=\prod_{p\in \pi}n_p$ denotes the $\pi$-part of $n$. Recall that, for a finite group $G$, $\pi(G)$ is the set of prime divisors of $|G|$. For $p\in\pi(G)$, we denote by $\Syl_p(G)$ the set of Sylow $p$-subgroups of $G$. For a subgroup $G\leq\Sym(\Omega)$ we denote by $\Orb(G)$ the set of $G$-orbits in~$\Omega$.

The proof of Theorem~\ref{ABGrpClosure} is developed using ideas from~\cite{ChurikovPonomarenko}.
First we present separately two lemmas as they are general results about finite nilpotent groups.

\begin{lemma}\label{SetwiseStabilizers}
Let $G$ be a finite nilpotent permutation group, let $p\in\pi(G)$, $k$ be a positive integer,  and $P\in\Syl_p(G)$. Let $\Delta_1,\ldots,\Delta_k\in \Orb(P)$, $\Delta=\bigcup_{i=1}^k \Delta_i$, and $L$ be the subgroup of $G$ consisting of all elements fixing each $\Delta_i$ setwise. Then $L^\Delta= P^\Delta.$
\end{lemma}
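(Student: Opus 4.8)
The plan is to exploit the fact that in a finite nilpotent group $G$ the Sylow $p$-subgroup $P$ is normal (indeed $G$ is the direct product of its Sylow subgroups), and to show that the non-$p$-part of $G$ acts trivially on $\Delta$, so that any element of $L$ can be adjusted by an element of $P$ to fix $\Delta$ pointwise. Write $G = P \times Q$ where $Q$ is the Hall $p'$-subgroup of $G$. First I would observe the containment $P^\Delta \leq L^\Delta$: every element of $P$ fixes each of its own orbits $\Delta_i$ setwise, hence lies in $L$, and the claim reduces to proving the reverse containment $L^\Delta \leq P^\Delta$.

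The key point is to understand the $P$-orbits $\Delta_i$ as $G$-invariant objects. Since $P \trianglelefteq G$, the group $G$ permutes the set $\Orb(P)$ of $P$-orbits among themselves; more precisely $Q$ acts on $\Orb(P)$. Consider one orbit $\Delta_i$ and a point $\alpha \in \Delta_i$; then $\Delta_i = \alpha^P$, and the $Q$-translate $\alpha^q$ lies in $(\alpha^q)^P = (\alpha^P)^q = \Delta_i^q$. Now the length of each $P$-orbit is a power of $p$ (a divisor of $|P|$), while $|Q|$ is coprime to $p$, so the $Q$-orbit of $\Delta_i$ in $\Orb(P)$ has $p'$-length; but I also need that $Q$ fixes each $\Delta_i$ setwise, which follows because $\Delta_i$ is a union of $Q$-orbits inside $\alpha^G$ and one can count: actually the cleanest route is to note that for $q \in Q$ the permutation $q$ restricted to $\alpha^G$ commutes with the $P$-action, the stabiliser $G_\alpha = P_\alpha \times Q_\alpha$, and $|\alpha^P| = |P : P_\alpha|$ is a $p$-number while $[\Delta_i{}^q : \Delta_i]$-type indices are $p'$-numbers — forcing $\Delta_i^q = \Delta_i$. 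Hence $Q$ fixes every $\Delta_i$ setwise, so $Q \leq L$.

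Given this, take any $x \in L$ and write $x = yz$ with $y \in P$, $z \in Q$. Since $Q$ fixes each $\Delta_i$ setwise pointwise? No — I need more: I want to show $z$ acts trivially on $\Delta = \bigcup_i \Delta_i$, i.e. on each $\Delta_i$. Here is where I use that $z \in Q$ has order coprime to $p$ while acting on the $p$-set $\Delta_i$: the group $\langle P, z\rangle = P \times \langle z\rangle$ acts on $\Delta_i$ with $P$ transitive, so $\langle z \rangle$ acts on $\Delta_i$ by $P$-equivariant maps, i.e. $z$ acts as an element of the centraliser of $P$ in $\Sym(\Delta_i)$; since $P$ is transitive on $\Delta_i$ that centraliser is isomorphic to $N_P(P_\alpha)/P_\alpha$, a section of $P$, hence a $p$-group, and the coprime element $z$ therefore acts trivially on $\Delta_i$. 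As this holds for every $i$, $z$ fixes $\Delta$ pointwise, so $x^\Delta = y^\Delta \in P^\Delta$. Therefore $L^\Delta \leq P^\Delta$, giving equality.

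The main obstacle I anticipate is the coprime-action argument in the last step: pinning down precisely that the centraliser in $\Sym(\Delta_i)$ of the transitive $p$-group $P^{\Delta_i}$ is itself a $p$-group (via the standard identification of the centraliser of a transitive group with $N_H(H_\alpha)/H_\alpha$ acting regularly), and then invoking that an element of $p'$-order lying in a $p$-group is trivial. Everything else — normality of $P$ in the nilpotent group $G$, the easy inclusion $P^\Delta \leq L^\Delta$, and the bookkeeping that $Q \leq L$ — is routine. One should double-check that the restriction maps behave well, i.e. that $L^\Delta$ and $P^\Delta$ are compared as subgroups of $\Sym(\Delta)$ and that an element of $G$ fixing $\Delta$ pointwise contributes trivially, but no genuine difficulty arises there.
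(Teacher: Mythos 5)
Your overall strategy coincides with the paper's: write $G=P\times Q$ with $Q$ the Hall $p'$-subgroup, reduce to showing that the $Q$-component of an element of $L$ acts trivially on each $\Delta_i$, and get this from the fact that such an element lies in the centraliser in $\Sym(\Delta_i)$ of the transitive $p$-group $P^{\Delta_i}$, which is semiregular and hence a $p$-group, so that a $p'$-element of it must be trivial. That final coprimality step is exactly the paper's argument and is correct.

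However, your intermediate claim that $Q\leq L$ --- that the entire Hall $p'$-subgroup fixes every $P$-orbit setwise --- is false, and the counting sketch you offer for it does not cohere: $\Delta_i^q$ and $\Delta_i$ are both $P$-orbits, hence equal or disjoint, so there is no ``index'' to compare, and knowing that the $Q$-orbit of $\Delta_i$ in $\Orb(P)$ has $p'$-length does not force it to be a singleton. A concrete counterexample: let $G=P\times Q$ act regularly on itself; the $P$-orbits are the cosets $Pq$, and $Q$ permutes them regularly, fixing none of them setwise unless $Q=1$. Fortunately you need much less: only that the particular $z\in Q$ arising as the $p'$-component of a given $x=yz\in L$ (with $y\in P$) fixes each $\Delta_i$ setwise, so that its restriction to $\Delta_i$ is defined and commutes with $P^{\Delta_i}$. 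This has a one-line proof which you skipped and which is how the paper proceeds: since $P\leq L$ and $x\in L$, we have $z=y^{-1}x\in L$. With that substitution in place of the false claim, your argument closes and agrees with the paper's.
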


\begin{proof}
By the definition of $L$, the subgroup $P\leq L$, and hence $P^\Delta\leq L^\Delta$. We now prove the converse. Since $G$ is nilpotent, we have $G=P\times H$, where $H$ is the Hall $p'$-subgroup of~$G$. Let $g\in L$, so $g=xy$ for some (unique) $x\in P$ and $y\in H$. Since $P\leq L$, we have $y=x^{-1}g\in L$.

We claim that $y^\Delta=1$, or equivalently, that  $y^{\Delta_i}=1_{\Delta_i}$ for each $i=1,\ldots, k$.
Since $y\in H\leq C_G(P)$ it follows that, for each $i$,  $y^{\Delta_i}$ belongs to the centralizer $Z_i$ of the transitive group $P^{\Delta_i}\leq\Sym(\Delta_i)$, which is semiregular by~\cite[Theorem 3.2]{PS}. In particular  $|Z_i|$ divides~$|\Delta_i|$ which is a $p$-power, so $Z_i$ is a~$p$-group. Consequently, $y^{\Delta_i}$  is a $p$-element. Since $y\in H$ and $|H|$ is coprime to $p$, this implies that $y^{\Delta_i}=1$, for each $i$, and hence that $y^\Delta=1$,  proving the claim. Thus, $g^\Delta=(xy)^\Delta=x^\Delta y^\Delta=x^\Delta\in P^\Delta$, as required.
\end{proof}

\begin{lemma}\label{HallSubgrpOrbits}\emph{\cite[Lemma~2.4]{ChurikovPonomarenko}}
Let $G\le\Sym(\Omega)$, where $n=|\Omega|$ and $\pi\subseteq\pi(G)$. Suppose that $G$ is transitive and nilpotent, and let $H$ be a Hall $\pi$-subgroup of $G$. Then
\begin{itemize}
    \item[{\rm (1)}] the size of every $H$-orbit is equal to $n_\pi$, and
    \item[{\rm (2)}] $G$ acts on $\Orb(H)$; moreover, the kernel of this action is equal to~$H$.
\end{itemize}
\end{lemma}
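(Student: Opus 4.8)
The plan is to exploit the structure of the transitive nilpotent group $G$ together with the fact, already isolated in Lemma~\ref{SetwiseStabilizers}, that the centralizer of a transitive group is semiregular \cite[Theorem 3.2]{PS}. Since $G$ is nilpotent it is the internal direct product of its Sylow subgroups, so $G=H\times K$, where $K$ is the Hall $\pi'$-subgroup; here $|H|$ is the $\pi$-part and $|K|$ the $\pi'$-part of $|G|$, and $H$, being a product of characteristic Sylow subgroups, is characteristic and hence normal in $G$. Because $H\trianglelefteq G$ and $G$ is transitive, the $H$-orbits form a block system that $G$ permutes transitively; in particular they all have a common size $s$, so $|\Orb(H)|=m:=n/s$. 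Moreover $H$ fixes each of its own orbits setwise, so $H$ lies in the kernel of the induced $G$-action on $\Orb(H)$, and the quotient $G/H$ acts transitively on the $m$ points of $\Orb(H)$.

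For part~(1) I would read off the size $s$ by a coprimality argument. On the one hand $s$ divides $|H|$, so $s$ is a $\pi$-number. On the other hand $m=|\Orb(H)|$ divides $|G/H|=|K|$, a $\pi'$-number, so $m$ is a $\pi'$-number. Since $n=sm$ with $s$ a $\pi$-number and $m$ a $\pi'$-number, uniqueness of the $\pi$-part forces $s=n_\pi$ (and $m=n_{\pi'}$), which is exactly the claimed orbit size.

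For part~(2), the $G$-action on $\Orb(H)$ and the inclusion $H\leq\Ker$ have already been observed, so it remains to prove the reverse inclusion. Let $N$ denote this kernel and take $g\in N$; writing $g=hk$ with $h\in H$ and $k\in K$ (commuting, by the direct decomposition), we have $h\in H\leq N$ and hence $k=h^{-1}g\in N\cap K$. I would then argue orbit by orbit, exactly as in Lemma~\ref{SetwiseStabilizers}: on each $H$-orbit $\Delta$ the restriction $k^\Delta$ centralizes the transitive group $H^\Delta\leq\Sym(\Delta)$, so by \cite[Theorem 3.2]{PS} it lies in a semiregular subgroup whose order divides $|\Delta|=n_\pi$, a $\pi$-number; but $k$ is a $\pi'$-element, so $k^\Delta$ has $\pi'$-order and must therefore be trivial. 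As the $H$-orbits cover $\Omega$ and $G$ is faithful, $k=1$, giving $g=h\in H$ and hence $N\leq H$.

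The main obstacle is this reverse inclusion in part~(2): everything else is bookkeeping with coprime $\pi$- and $\pi'$-parts, but showing that no nontrivial $\pi'$-element can fix every $H$-orbit setwise genuinely needs the semiregularity of the centralizer of a transitive group, combined with the fact that $k$ commutes with $H$. This is the same mechanism used in the proof of Lemma~\ref{SetwiseStabilizers}; indeed part~(2) may be viewed as the case of that lemma in which $H$ plays the role of $P$ and $\Delta_1,\dots,\Delta_k$ are taken to be all the orbits of $H$, the only difference being the harmless generalization from a Sylow $p$-subgroup to a Hall $\pi$-subgroup, which the argument accommodates verbatim.
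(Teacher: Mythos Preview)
The paper does not supply its own proof of this lemma; it is simply quoted from \cite[Lemma~2.4]{ChurikovPonomarenko} and used as a black box. Your argument is correct and self-contained: the coprimality bookkeeping for part~(1) (writing $n=sm$ with $s$ a $\pi$-number and $m$ a $\pi'$-number) and the semiregular-centralizer argument for part~(2) both go through as written, and your closing observation that part~(2) is essentially the mechanism of Lemma~\ref{SetwiseStabilizers} with a Hall $\pi$-subgroup in place of a single Sylow $p$-subgroup is exactly right.
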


\subsubsection*{Proof of Theorem~\ref{ABGrpClosure}}

Let $G$ be a finite abelian permutation group on a set $\Omega$, and let $k\geq 2$. Then by \cite[Theorem~5.8]{Wielandt1969} and~\cite[Exercise~5.26]{Wielandt1969}, $G^{(k),\Omega}$ is abelian, and $\pi(G^{(k),\Omega})=\pi(G)$. Let $p\in\pi(G)$, and let $P$ and $Q$ be the (unique) Sylow $p$-subgroups of $G$ and $G^{(k),\Omega}$ respectively. 

\medskip\noindent
\emph{Claim 1.} $P\leq P^{(k),\Omega}\leq Q$, and $\Orb(P)=\Orb(Q)$.

\smallskip\noindent
\emph{Proof of Claim 1.}\quad 
By~\cite[Theorem~5.8]{Wielandt1969} and~\cite[Exercise~5.28]{Wielandt1969}, the group $P^{(k),\Omega}$ is a $p$-group, and hence $P\leq P^{(k),\Omega}\leq Q$. It remains to prove that each $P$-orbit is a $Q$-orbit. Let $\Delta$ be  a $P$-orbit, and let $\Gamma$ be the $G$-orbit containing $\Delta$. By \eqref{eq1}, $G\leq G^{(k),\Omega} \leq G^{(1),\Omega}$, and hence $G^{(k),\Omega}$ has the same orbits as $G$ in $\Omega$. Thus $\Gamma$ is also a $G^{(k),\Omega}$-orbit, and hence the  $Q$-orbit $\Delta'$ containing $\Delta$ satisfies
$
\Delta\subseteq \Delta'\subseteq\Gamma.
$
The induced permutation groups $G^\Gamma$ and $(G^{(k),\Omega})^{\Gamma}$ are both transitive and abelian, so applying Lemma~\ref{HallSubgrpOrbits} to each of these groups with Hall subgroups $P^\Gamma, Q^\Gamma$, respectively, yields $|\Delta|=|\Gamma|_p=|\Delta'|$.  Thus $\Delta=\Delta'$, and Claim 1 is proved.
\qed

\medskip\noindent
\emph{Claim 2.} $P^{(k),\Omega}=Q$.

\smallskip\noindent
\emph{Proof of Claim 2.}\quad 
Let $(\alpha_1,\ldots,\alpha_k)\in\Omega^k$, and $g\in Q$. By Theorem~\ref{thm:W}, there exists $h\in G$ such that
$$
(\alpha_1,\ldots,\alpha_k)^g=(\alpha_1,\ldots,\alpha_k)^h.
$$
For each $i=1\ldots k$, let $\Delta_i$ be the $Q$-orbit containing $\alpha_i$. Then by Claim 1, each $\Delta_i$ is also a $P$-orbit. Since $P\unlhd G$, the group $G$ permutes the $P$-orbits, and for each $i$, since $\alpha_i^h=\alpha_i^g\in\Delta_i$, it follows that $h$ fixes each $\Delta_i$ setwise. Thus $h$ lies in the subgroup $L$ of Lemma~\ref{SetwiseStabilizers}, and setting $\Delta=\bigcup_{i=1}^k \Delta_i$, it follows from Lemma~\ref{SetwiseStabilizers} that $h^\Delta = u^\Delta$ for some $u\in P$. Thus
$$
(\alpha_1,\ldots,\alpha_k)^g=(\alpha_1,\ldots,\alpha_k)^h=(\alpha_1,\ldots,\alpha_k)^u.
$$
Since such an element $u\in P$ exists for each $k$-tuple of points and each $g\in Q$, it follows from Theorem~\ref{thm:W} that $g\in P^{(k),\Omega}$. Thus $Q\leq P^{(k),\Omega}$, and the reverse inclusion holds by Claim 1.
\qed

Now we complete the proof of Theorem~\ref{ABGrpClosure}. Since $G^{(k),\Omega}$ is abelian, $G^{(k),\Omega}$ is the direct product of its Sylow subgroups. Further,  for each $p\in\pi(G)$ it follows from Claim 2 that the unique Sylow $p$-subgroup of $G^{(k),\Omega}$ is   
$P^{(k),\Omega}$,  where $P$ is the unique Sylow $p$-subgroup of $G$.\qed

\section{Proof of the main results}\label{proofs}

Recall the definition of $n(G)$ given in Section~\ref{intro} for a finite abelian group $G$. We also set $N(G):=\sum_{p\in\pi(G)} n(G_p)$. If $G\leq \Sym(\Omega)$ then the \emph{base size} $b(G, \Omega)$ of $G$ is the smallest integer $b$ for which there exist $\alpha_1,\dots,\alpha_b\in\Omega$ such that $G_{\alpha_1\ldots \alpha_{b}}=1$. Such a set $\alpha_1,\dots,\alpha_b$ is called a \emph{base} of $G$.  Note that, by Theorem~\ref{thm:W2}, $G=G^{(b+1),\Omega}$,
where $b = b(G,\Omega)$.

\begin{lemma}\label{BaseNumber} Let $G$ be a finite abelian group and suppose that $G$ has a faithful permutation representation on a finite set $\Omega$. Then $b(G, \Omega)\leq N(G)$, and equality holds for some $\Omega$.
\end{lemma}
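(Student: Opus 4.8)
The plan is to prove the two halves separately. For the upper bound $b(G,\Omega)\leq N(G)$, I would use the primary decomposition $G=\prod_{p\in\pi(G)} G_p$ together with a ``coprime independence'' argument. Fix a faithful action of $G$ on $\Omega$. For each prime $p\in\pi(G)$, the Sylow subgroup $G_p$ acts faithfully on $\Omega$, so it has a base; the key claim is that $G_p$ has a base of size at most $n(G_p)$ \emph{in this action}. Granting this, one chooses for each $p$ a base $\alpha^{(p)}_1,\dots,\alpha^{(p)}_{b_p}$ of $G_p$ with $b_p\leq n(G_p)$, and forms the union over all $p$, a set of at most $\sum_p n(G_p)=N(G)$ points. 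The pointwise stabiliser in $G$ of this union is a subgroup $K$ whose image in each $G_p$-quotient is trivial on the relevant points; since $G$ is abelian, $K\cap G_p$ fixes $\alpha^{(p)}_1,\dots,\alpha^{(p)}_{b_p}$ pointwise and hence is trivial, so $K=\prod_p (K\cap G_p)=1$. Thus the union is a base and $b(G,\Omega)\leq N(G)$.

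The heart of the matter is therefore the $p$-group estimate: an abelian $p$-group $P$ with $n(P)=n$ invariant factors has base size at most $n$ in every faithful permutation action. Here I would argue as follows. Since $P$ acts faithfully, some orbit $\Delta_1$ affords a faithful action of $P/P_1$ for a proper subgroup $P_1$; more usefully, pick a point $\alpha_1$ whose stabiliser $P_{\alpha_1}$ is as small as possible, or rather choose points greedily. The cleanest route: a point stabiliser $P_{\alpha}$ in an abelian $p$-group action on an orbit is a subgroup, and the intersection $P_{\alpha_1}\cap\dots\cap P_{\alpha_j}$ is a descending chain of subgroups of $P$; because $P$ is abelian with $n$ invariant factors, every subgroup of $P$ can be generated by $n$ elements (this is the key structural fact — $d(P)\leq n(P)$ for abelian $P$, indeed $d(H)\leq n(P)$ for every subgroup $H\leq P$ since $P\cong \prod_{i=1}^n \mZ_{d_i}$ and a subgroup of a product of $n$ cyclic groups... — wait, that last claim is false in general, but for abelian $p$-groups it holds: every subgroup of $(\mZ_{p^{a_1}})\times\dots\times(\mZ_{p^{a_n}})$ is generated by $n$ elements). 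One then shows that as long as the current pointwise stabiliser $M_j:=P_{\alpha_1\dots\alpha_j}$ is nontrivial, one can find $\alpha_{j+1}$ with $M_{j+1}\subsetneq M_j$, and moreover the \emph{minimal number of generators strictly drops}, or at least the chain terminates within $n$ steps. Formalising the ``drops within $n$ steps'' is where care is needed: one tracks not the rank but the fact that each new point kills at least one invariant-factor's worth of the stabiliser, using that $M_j$ is itself an abelian $p$-group and that faithfulness forces some orbit on which $M_j$ acts nontrivially.

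For the lower bound — that equality $b(G,\Omega)=N(G)$ is attained for some $\Omega$ — I would exhibit the explicit representation $\Omega=\bigsqcup_{p\in\pi(G)} G_p$, i.e.\ the disjoint union over primes $p$ of the regular representation of each primary component $G_p$ (with $G$ acting on the $G_p$-component through the projection $G\to G_p$). On the $G_p$-component, which is the regular action of the abelian $p$-group $G_p$ with $n(G_p)$ invariant factors, the base size is exactly $n(G_p)$: no fewer than $n(G_p)$ points can pin down $G_p$ because any $j<n(G_p)$ points have a joint stabiliser that, viewed in the regular action, is an intersection of $j$ point stabilisers each of which is a coset-type condition, and an abelian $p$-group of rank $n$ cannot be generated — hence its regular action cannot be made ``base-like'' — by fewer than $n$ elements; concretely the pointwise stabiliser of $j$ points in the regular representation of $G_p$ is the identity iff those points generate $G_p$ as a group under the difference operation, which needs at least $d(G_p)=n(G_p)$ of them. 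Since distinct primary components are acted on independently and faithfully, a base for $G$ must restrict to a base for each $G_p$ on its component, forcing $b(G,\Omega)\geq\sum_p n(G_p)=N(G)$; combined with the upper bound this gives equality.

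\smallskip

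The main obstacle I anticipate is the $p$-group upper bound, specifically proving that each new base point can be chosen so that the descending chain of pointwise stabilisers reaches $1$ in at most $n(P)$ steps. The naive ``rank drops by one each time'' is not literally true at the level of minimal generator number for an arbitrary choice of point, so the argument must either (i) choose the new point cleverly — e.g.\ inside an orbit on which $M_j$ acts with a point stabiliser of strictly smaller rank, whose existence follows from $M_j$ acting faithfully somewhere and the structure of abelian $p$-group actions — or (ii) replace ``rank'' by a finer invariant (such as the multiset of invariant-factor exponents of $M_j$, ordered lexicographically) that provably decreases. I expect route (i) to work: since $M_j\neq 1$ acts faithfully, there is an orbit $\Delta$ on which $M_j^{\Delta}\neq 1$; the kernel of the $M_j$-action on $\Delta$ has smaller rank or, if not, a point of $\Delta$ has stabiliser missing a ``new'' generator, and iterating exhausts the at most $n(P)$ independent cyclic directions of $P$.
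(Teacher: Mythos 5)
There are two genuine problems here, one in each half.

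For the upper bound, you correctly isolate the crux --- that an abelian $p$-group $P$ with $n(P)=n$ invariant factors has base size at most $n$ in \emph{every} faithful action --- but you never actually prove it: both of your routes (i) and (ii) are left as sketches, and you yourself note that the naive ``rank drops each step'' claim fails for an arbitrary choice of point. The missing mechanism is this (and it also makes the preliminary reduction to $p$-groups unnecessary): write $G=H_1\times\dots\times H_n$ with each $H_i$ nontrivial cyclic of \emph{prime power} order and $n=N(G)$; if $H_1=\langle h_1\rangle\cong\mZ_{p^a}$, choose $\alpha$ not fixed by $h_1^{p^{a-1}}$, i.e.\ moved by the unique minimal subgroup of $H_1$. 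Since every nontrivial subgroup of the cyclic $p$-group $H_1$ contains that minimal subgroup, this forces $G_\alpha\cap H_1=1$, so $G_\alpha\cong G_\alpha H_1/H_1$ embeds in $\prod_{i\geq2}H_i$ and $N(G_\alpha)\leq N(G)-1$; now induct on $N(G)$. Your phrase ``kills at least one invariant-factor's worth of the stabiliser'' is exactly this, but without choosing $\alpha$ to avoid being fixed by the socle of a cyclic direct factor the decrement is not guaranteed.

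The lower bound construction is simply wrong. In the regular representation of $G_p$ on itself by right multiplication, the stabiliser of a \emph{single} point is already trivial, so each component of your $\Omega$ contributes $1$ to the base size, not $n(G_p)$; your disjoint union has base size $|\pi(G)|$ rather than $N(G)$ (for $G=\mZ_p\times\mZ_p$ your $\Omega$ is just the regular representation, whose base size is $1$, not $2$). The assertion that the pointwise stabiliser of $j$ points in the regular action is trivial iff those points generate $G_p$ conflates bases of permutation actions with generating sets. The correct construction takes one orbit $\Omega_i:=H_i$ for \emph{each} of the $N(G)$ cyclic prime-power factors $H_i$, with $H_i$ acting regularly on $\Omega_i$ and trivially on $\Omega_j$ for $j\neq i$: then $H_i$ moves points only inside $\Omega_i$, so any base must meet every $\Omega_i$, forcing $b(G,\Omega)\geq N(G)$, and one point per orbit suffices, giving equality.
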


\begin{proof}
Let $G=\prod_{p\in\pi(G)} G_p$ with $\pi(G)$ the set of primes dividing $|G|$, and $G_p$ the Sylow $p$-subgroup of $G$, for $p\in\pi(G)$. Then, by the definition of $N(G)$ and the $n(G_p)$, $G$ has a direct decomposition $G=H_1\times\dots\times H_n$, with each $H_i$ nontrivial and cyclic of prime power order, and $n=N(G)$. For each $i$, $H_i$ acts regularly on $\Omega_i:=H_i$ by (right) multiplication, and $G$ acts faithfully on $\Omega:=\cup_{i=1}^n \Omega_i$ (where $H_j$ acts trivially on $\Omega_i$ for $i\ne j$). Thus the $G$-orbits in $\Omega$ are the sets $\Omega_i$, and for each $i$ the subgroup $H_i$ acts nontrivially only on the orbit $\Omega_i$. Thus each base must contain a point from each of the $G$-orbits. It follows that the base size equals $N(G)$ for this faithful permutation representation of $G$.

Now consider an arbitrary faithful permutation representation of $G$, that is, suppose that $G\leq\Sym(\Omega)$. We prove by induction on $N(G)$ that $G$ has base size at most $N(G)$.  Now
 $H_1=\langle h_1\rangle\cong \mZ_{p^a}$, for some prime $p$ and positive integer $a$, and
as $G$ acts faithfully on $\Omega$ there exists $\alpha\in\Omega$ which is not fixed by $h_1^{p^{a-1}}$.
This implies that $G_\alpha\cap H_1=1$.
If $N(G)=1$ then $G=H_1$ is a cyclic $p$-group, and $G_\alpha=1$, so $\{\alpha\}$ is a base.
Assume now that $N(G)\geq 2$ and that the assertion holds for groups $X$ with $N(X)<N(G)$.
Since $G_\alpha\cap H_1=1$, we have $G_\alpha\cong (G_\alpha H_1)/H_1 \leq G/H_1\cong \prod_{i=2}^{n}H_i$ so $N(G_\alpha)\leq n-1=N(G)-1$, and hence by induction, $G_\alpha$ has a base $\alpha_1,\dots,\alpha_s$ in $\Omega\setminus\{\alpha\}$ with $s\leq N(G)-1$. Then $\alpha_1,\dots,\alpha_s,\alpha$ is a base for $G$ in $\Omega$, and the result follows by induction.
\end{proof}

We now prove Theorem~\ref{thm2} in the case of $p$-groups. The second part of the lemma is proved using a construction developed from ideas in the book of Chen and Ponomarenko \cite[Proposition 2.2.26]{CP}. An element $\tau\in\Sym(\Omega)$ is called a \emph{cycle} if it is not the identity and has exactly one cycle of length greater than $1$ in its disjoint cycle representation; the length of this cycle is denoted $|\tau|$. Two cycles are said to be \emph{independent} if the sets of points they move are disjoint.

\begin{lemma}\label{PGroupsCase}
Let $G$ be a finite abelian $p$-group with $|G|>1$. Then $G$ is totally $(n(G)+1)$-closed, but is not totally $n(G)$-closed.
\end{lemma}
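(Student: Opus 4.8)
The plan is to handle the two assertions separately, writing $n:=n(G)$ throughout.

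\medskip\noindent\emph{Total $(n+1)$-closure.} I expect this to follow at once from Lemma~\ref{BaseNumber}. Since $G$ is a $p$-group, its primary decomposition is trivial, so $N(G)=n(G_p)=n$. Hence in any faithful representation $G\le\Sym(\Omega)$ the base size satisfies $b:=b(G,\Omega)\le N(G)=n$, and Theorem~\ref{thm:W2} gives $G=G^{(b+1),\Omega}$. Because $b+1\le n+1$, the inclusions \eqref{eq1} then force $G\le G^{(n+1),\Omega}\le G^{(b+1),\Omega}=G$, so $G^{(n+1),\Omega}=G$. As $\Omega$ was an arbitrary faithful representation, $G$ is totally $(n+1)$-closed.

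\medskip\noindent\emph{Failure of total $n$-closure.} Here I would exhibit one faithful representation in which the $n$-closure is strictly larger than $G$, by a construction in the spirit of \cite[Proposition 2.2.26]{CP}. Write $G=\langle a_1\rangle\times\cdots\times\langle a_n\rangle$ with $|a_i|=p^{e_i}$ and $1\le e_1\le\cdots\le e_n$. I would let $\Omega$ be the disjoint union of $n+1$ orbits $\Omega_1,\dots,\Omega_{n+1}$, with $|\Omega_i|=p^{e_i}$ for $i\le n$ and $|\Omega_{n+1}|=p^{e_1}$: on $\Omega_i$ (for $i\le n$) let $a_i$ act as a $p^{e_i}$-cycle and let the other generators fix $\Omega_i$ pointwise, while on $\Omega_{n+1}$ let \emph{every} $a_i$ act as one fixed $p^{e_1}$-cycle $\tau$. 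One checks this gives a well-defined homomorphism $G\to\Sym(\Omega)$ (for $\Omega_{n+1}$ one uses $p^{e_1}\mid p^{e_i}$), and it is faithful because an element fixing $\Omega_1,\dots,\Omega_n$ pointwise must be trivial. Now choose $s\in\{0,1\}$ with $s\not\equiv n\pmod{p^{e_1}}$ (possible since $0\not\equiv 1\pmod{p^{e_1}}$), and let $x\in\Sym(\Omega)$ be the permutation agreeing with $a_i$ on each $\Omega_i$ (for $i\le n$) and acting as $\tau^s$ on $\Omega_{n+1}$; so $x$ is built from pairwise independent cycles, one on each orbit.

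\medskip\noindent The two points to verify are $x\notin G$ and $x\in G^{(n),\Omega}$. For the first: if $x=\prod_i a_i^{c_i}$, comparing actions on $\Omega_i$ for $i\le n$ forces $c_i\equiv1\pmod{p^{e_i}}$, hence $\sum_i c_i\equiv n\pmod{p^{e_1}}$, while the action on $\Omega_{n+1}$ demands $\sum_i c_i\equiv s\pmod{p^{e_1}}$, contradicting the choice of $s$. For the second I would apply Theorem~\ref{thm:W}: since $G$ is abelian, each transitive constituent $G^{\Omega_i}$ is regular, so a permutation of $\Omega_i$ is determined by its value at a single point; given any $(\alpha_1,\dots,\alpha_n)\in\Omega^n$ the points meet at most $n$ of the $n+1$ orbits, so some $\Omega_{i_0}$ is avoided, and I would write down an element of $G$ agreeing with $x$ on every orbit except possibly $\Omega_{i_0}$ — namely $a_1\cdots a_n$ when $i_0=n+1$, and for $i_0=j\le n$ a product $\prod_i a_i^{c_i}$ with $c_i=1$ for $i\ne j$ and $c_j$ adjusted modulo $p^{e_1}$ (using $p^{e_1}\mid p^{e_j}$) so that $\sum_i c_i\equiv s$. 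Such an element then agrees with $x$ at $\alpha_1,\dots,\alpha_n$, so $x\in G^{(n),\Omega}$, and therefore $G^{(n),\Omega}\supsetneq G$. I expect the main obstacle to be precisely this bookkeeping in the construction: the action on the extra orbit $\Omega_{n+1}$ and the exponent $s$ must be tuned so that no single element of $G$ realises $x$ while every ``$n$ out of $n+1$'' pattern of orbit-restrictions \emph{is} realised inside $G$ — the congruence $s\not\equiv n\pmod{p^{e_1}}$ is exactly what achieves both — and a secondary care point is justifying the ``one point determines the orbit-restriction'' step via regularity, which is what lets Theorem~\ref{thm:W} be applied.
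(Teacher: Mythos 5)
Your proposal is correct and follows essentially the same route as the paper: the first half (base size $\le N(G)=n(G)$ via Lemma~\ref{BaseNumber}, then Theorem~\ref{thm:W2} and \eqref{eq1}) is identical, and the second half is the same construction — a diagonal-type embedding of $G$ on $n+1$ orbits of sizes $p^{e_1},\dots,p^{e_n}$ plus an extra orbit of size $p^{e_1}$, with the pigeonhole ``$n$ points miss one of $n+1$ orbits'' argument feeding into Theorem~\ref{thm:W}. The only cosmetic difference is the witness element: the paper exhibits the single cycle $\tau_0$ on the extra orbit as lying in the $n$-closure but not in the group, whereas you exhibit $x$, which differs from the paper's witness by an element of $G$, so the two verifications are equivalent.
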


\begin{proof}
Since $G$ is an abelian $p$-group,  $N(G)=n(G)$. By Lemma~\ref{BaseNumber}, if $G$ is faithfully represented as a subgroup of $\Sym(\Omega)$, then $b:=b(G,\Omega)\leq n(G)$, and by Theorem~\ref{thm:W2}, $G=G^{(b+1),\Omega}$. It follows from \eqref{eq1} that $G=G^{(n(G)+1),\Omega}$. Since this holds for all faithful permutation representations of $G$, $G$ is totally $(n(G)+1)$-closed.
\medskip

As discussed in Section~\ref{intro}, $G\cong\mZ_{d_1}\times\mZ_{d_2}\times\ldots\times\mZ_{d_n}$, with
$d_1>1$,  $d_i|d_{i+1}$ for $1\leq i<n$, and $n=n(G)$. Let $\Omega$ be a set of size $d_1+\sum_{i=1}^nd_i$, and let $\tau_0,\tau_1,\ldots,\tau_n\in\Sym(\Omega)$ be pairwise independent cycles on $\Omega$ such that $|\tau_0|=d_1$, and $|\tau_i|=d_i$ for $i=1\ldots n$. Let $H_1=\langle \tau_0\tau_1 \rangle$ and $H_i = \langle \tau_0^{-1}\tau_i \rangle$ for $i=2\ldots n$, and let
$$
H=\langle H_1,\ldots,H_n \rangle.
$$
We claim that $H\cong G$. Indeed, the groups $H_i$ commute, and  an easy proof by induction on $n$ shows that
$$
H_i \cap \langle H_1,\ldots,H_{i-1},H_{i+1},\ldots,H_n \rangle = 1,\ \text{for}\ i=1\ldots n.
$$
Thus $H = H_1 \times \ldots \times H_n$, with $H_i \cong \mZ_{d_i}$ for $i=1\ldots n$, proving the claim.

Now we will use Theorem~\ref{thm:W} to show that $\tau_0\in H^{(n),\Omega}$. Let $(\alpha_1,\ldots,\alpha_n) \in \Omega^n$, and for  $i=0,\ldots, n$, let $\Delta_i$ denote the set of points of $\Omega$ moved by $\tau_i$, so that $\{\Delta_0,\ldots,\Delta_n\}$ is the set of $H$-orbits in $\Omega$. Since $H$ has $n+1$ nontrivial orbits, there exists $k\in\{0,1,\dots,n\}$ such that $\Delta_k\cap\{\alpha_1,\dots,\alpha_n\}=\varnothing$.  Define a permutation $\tau$ as follows:
\[
\tau=\begin{cases}
1 \text{,\quad if $k=0$},\\
\tau_0\tau_k^{-1} \text{, if $1\leq k\leq n$}.
\end{cases}
\]
By definition, $\tau\in H$. If $\tau=1$, then both $\tau$ and $\tau_0$ fix each of the $\alpha_i$ so
$(\alpha_1,\ldots,\alpha_n)^{\tau_0}=(\alpha_1,\ldots,\alpha_n)^\tau$. On the other hand, if $\tau=\tau_0\tau_k^{-1}$ for some $k$, then $\tau$ and $\tau_0$ induce the same permutation on $\Omega\setminus\Delta_k$,
and again we have $(\alpha_1,\ldots,\alpha_n)^{\tau_0}=(\alpha_1,\ldots,\alpha_n)^{\tau}$. Thus, by
Theorem~\ref{thm:W}, $\tau_0\in H^{(n),\Omega}$. By the construction, $\tau_0\notin H$, and hence $H\ne H^{(n),\Omega}$. Thus $G$ is not totally $n$-closed.
\end{proof}

\begin{remark}
{\rm
Theorem~\ref{thm1} follows from
Lemma~\ref{PGroupsCase} since, for each integer~$k\geq 2$, there are infinitely many finite abelian $p$-groups with $k$ invariant factors.
}
\end{remark}

Finally we prove Theorem~\ref{thm2} for an arbitrary finite abelian group $G$ with $|G|>1$. Suppose that $G$ is faithfully represented on a set $\Omega$. Since $n(G)=\max_{p\in\pi(G)} n(G_p)$, every Sylow subgroup $G_p$ of $G$ is $(n(G)+1)$-closed by Lemma~\ref{PGroupsCase}, and hence, by Theorem~\ref{ABGrpClosure}, we have $G^{(n(G)+1),\Omega}=G$. Thus $G$ is totally $(n(G)+1)$-closed.

Set $n :=n(G)$. If $n=1$ then, since $|G|>1$, it follows from Remark~\ref{rem-k1} that $G$ is not totally $1$-closed. Thus we may assume that $n\geq 2$. Now $n(G)=\max_{p\in\pi(G)} n(G_p)$, and hence we have $n=n(G_q)$ for some $q\in\pi(G)$.
By Lemma~\ref{PGroupsCase}, $G_q$ is not totally $n$-closed, so there exists a set $\Omega_q$ such that $G_q$ acts faithfully on $\Omega_q$ and $G^{(n),\Omega_q}\ne G_q$. There is nothing further to prove if $G=G_q$ so we may assume that $|\pi(G)|\geq2$.
For each $p\in\pi(G)\setminus\{q\}$, let
$\Omega_p =G_p$, and consider $G_p$ acting regularly on $\Omega_p$ by right multiplication. Thus $G$ acts faithfully on $\Omega:=\cup_{p\in\pi(G)} \Omega_p$. Since $n\geq 2$, it follows from Theorem~\ref{ABGrpClosure} that
$$
G^{(n),\Omega}=\prod_{p\in\pi(G)} (G_p)^{(n),\Omega_p}=(G_q)^{(n),\Omega_q}\times\prod_{\substack{p\in\pi(G) \\ p\neq q}} (G_p)^{(n),\Omega_p},
$$
which is not equal to $G$, because $(G_q)^{(n),\Omega_q}>G_q$ and for every $p\in\pi(G), p\neq q$ the group $G_p$ is $n$-closed as a regular group. Thus, $G$ is not totally $n$-closed, and the proof of Theorem~\ref{thm2} is complete.


\begin{thebibliography}{7}

\bibitem{AA} A. Abdollahi and M. Arezoomand, Finite nilpotent groups that coincide with
their 2-closures in all of their faithful permutation representations, \emph{J. Algebra
Appl.}, {\bf 17}(4)  2018, 1850065.

\bibitem{AAT} A. Abdollahi, M. Arezoomand and G. Tracey, On finite totally $2$-closed groups,
preprint 2020, arXiv:2001:09597

\bibitem{AMPT}  M. Arezoomand, M. A. Iranmanesh, C. E. Praeger, and G. Tracey,
 Totally $2$-closed finite simple groups, in preparation.

\bibitem{CP}
G.~Chen and I.~Ponomarenko, \emph{Coherent Configurations}, Central China Normal University Press, Wuhan (2019).

\bibitem{ChurikovPonomarenko}
D.~Churikov and I.~Ponomarenko, \emph{On $2$-closed abelian permutation groups}, preprint 2020, arXiv:2011.12011

\bibitem{EvdokimovP2001}
S.~Evdokimov and I.~Ponomarenko, {\em Two-closure of odd permutation group in polynomial time}, Discrete Math., {\bf 235}, No.~1-3, 221--232 (2001).


\bibitem{PS}
C. E. Praeger and C. Schneider,
\emph{Permutation groups and Cartesian decompositions}.
London Mathematical Society Lecture Note Series, 449. Cambridge University Press, Cambridge, 2018.


\bibitem{Wielandt1969} H. W. Wielandt, \emph{Permutation groups through invariant relations and invariant
functions}, Lecture Notes, Ohio State University, 1969. Also published in: \emph{Wielandt, Helmut, Mathematische
Werke (Mathematical works) Vol. 1. Group theory}. Walter de Gruyter \& Co., Berlin, 1994, pp. 237--296.


\end{thebibliography}
\end{document}